\newtheorem{satz}{Theorem}
\newtheorem{theorem}[satz]{Theorem}
\newtheorem{lemma}[satz]{Lemma}
\newtheorem{corollary}[satz]{Corollary}
\newtheorem{remark}[satz]{Remark}
\newtheorem{claim}[satz]{Claim}
\newcommand{\qed}{{} \hfill \mbox{$\Box$}}
\def\({\big (}
\def\){\big )}
\def\_phi{\varphi}
\title{On the Thue-Vinogradov Lemma}
\author{Jozsef Solymosi}
\affil{University of British Columbia, Vancouver\\ 
email: {solymosi@math.ubc.ca}}
\begin{document}
\maketitle
\begin{abstract}
We prove an extension of the Thue-Vinogradov Lemma. This paper is another example for the application of the polynomial method; R\'enyi polynomials and Stepanov's technique.
\end{abstract}

\section{Introduction}
In the introduction we state two classical results from elementary number theory, two lemmas from Thue and Vinogradov. In the second part of the paper we extend their results and illustrate the use of the new method by an application. 

\medskip
The lemmas of Thue and Vinogradov are clever applications of Dirichlet's box principle (also called as the pigeonhole principle). Our first result will go beyond that, it works with smaller sets.
The technique we are using here is a variant of the so called polynomial method in additive combinatorics. We are going to use R\'edei polynomials \cite{Re}, and the last step in the proof of Theorem \ref{Main} (and in its later variants) is based on Stepanov's method \cite{St}; if a degree $d$
polynomial is vanishing on a set of size $n$ with multiplicity at least $m$ then $n\leq d/m.$ The same method will be used in the last section, where we prove an inequality in additive combinatorics.

\subsection{The lemmas of Thue and Vinogradov}
Thue's Lemma is a useful tool in elementary number theory. The most famous application of the lemma is 
 to prove Fermat's theorem on sums of two squares. There is a nice description of Thue's argument in the book {\em "Proof from THE BOOK"} \cite{AZ}.
 The lemma is used in finding solutions of Diophantine equations involving quadratic forms. 
 There are various examples for such theorems and exercises in Nagell's Introduction to Number Theory \cite{Na}, and in Vinogradov's Elements of Number Theory \cite{Vi_3}.

\begin{lemma}[Thue's Lemma]\cite{Th}
Let $p$ be a prime. For any $a\in \mathbb{N},$ $p\nmid a,$ there are $x,y$
\[
x,y\in \left\{1,2,\ldots ,\left\lceil\sqrt{p}\right\rceil\right\}
\]
such that 

\begin{align*}
  ax &\equiv \pm y \pmod{p}.
\end{align*}

\end{lemma}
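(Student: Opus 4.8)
The plan is to prove this by Dirichlet's box principle, exactly as the introduction advertises. The key observation is that a grid of $m^2$ lattice points, with $m=\lceil\sqrt p\rceil$, contains strictly more points than there are residue classes modulo $p$, so two of them must land in the same class and their difference will supply the desired $x,y$.

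First I would fix $m=\lceil\sqrt p\rceil$ and consider the $m^2$ pairs $(u,v)$ with $u,v\in\{0,1,\dots,m-1\}$, mapping each to the residue $au-v \bmod p$. Since $p$ is prime it is not a perfect square, so $\sqrt p$ is irrational and $m>\sqrt p$; hence $m^2>p$. There are therefore more pairs than residue classes, and the pigeonhole principle yields two distinct pairs $(u_1,v_1)\neq(u_2,v_2)$ with $au_1-v_1\equiv au_2-v_2\pmod p$. Setting $x=u_1-u_2$ and $y=v_1-v_2$ gives $ax\equiv y\pmod p$ with $|x|,|y|\leq m-1$.

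The substance of the argument is then to rule out the two degenerate possibilities. If $x=0$, the congruence forces $y\equiv 0\pmod p$; but $|y|\leq m-1<p$, so $y=0$, contradicting that the two pairs are distinct. If $y=0$, then $p\mid ax$, and here the hypothesis $p\nmid a$ enters: together with $0<|x|\leq m-1<p$ it gives $p\nmid x$, so $p\mid ax$ is impossible. Hence both $x$ and $y$ are nonzero, and replacing them by their absolute values (absorbing the sign into the $\pm$) places $x,y$ in $\{1,\dots,m-1\}\subseteq\{1,\dots,\lceil\sqrt p\rceil\}$ with $ax\equiv\pm y\pmod p$, as required.

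I expect the only real obstacle to be the boundary bookkeeping: verifying the strict inequality $m^2>p$ (which is precisely where a prime being non-square makes the ceiling overshoot $\sqrt p$) and checking that neither coordinate of the difference vanishes. The non-vanishing of $y$ is the one genuinely arithmetic point, since it is the sole place the assumption $p\nmid a$ is used, and it is what upgrades a bare pigeonhole collision into the clean two-sided congruence $ax\equiv\pm y$.
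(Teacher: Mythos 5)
Your proof is correct and is exactly the Dirichlet box-principle argument the paper attributes to Thue (and does not spell out): the $m^2>p$ count of pairs $(u,v)\mapsto au-v \bmod p$ with $m=\lceil\sqrt p\rceil$, the collision, and the two degeneracy checks are all handled properly, with $p\nmid a$ used precisely where it must be. In fact you obtain the slightly sharper conclusion $x,y\leq\lceil\sqrt p\rceil-1$, which of course implies the stated bound.
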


Thue's Lemma was extended by Vinogradov to an asymmetric form. He used it in the paper {\em "On a general theorem concerning the distribution of the residues and
non-residues of powers"} \cite[Lemma 1]{Vi_1}, where he gave an elementary proof of the P\'olya-Vinogradov inequality. His extension, the following lemma,
can be also used to find solutions for some quadratic forms, more efficiently than Thue's Lemma.

\begin{lemma}[Vinogradov's Lemma]
Let $p$ be a prime. For any $a\in \mathbb{N},$ $p\nmid a,$ and $\alpha\in \mathbb{F}_p^*,$ there are $x,y$
\[
x\in \left\{1,2,\ldots ,\alpha\right\},\quad y\in \left\{1,2,\ldots ,\left\lfloor\frac{p}{\alpha}\right\rfloor\right\}
\]
such that 

\begin{align*}
  ax &\equiv \pm y \pmod{p},
\end{align*}
or equivalently 
\begin{align*}
  a &\equiv \pm \frac{y}{x} \pmod{p}.
\end{align*}

\end{lemma}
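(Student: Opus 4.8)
The plan is to prove the statement by Dirichlet's box principle, mirroring the standard pigeonhole proof of Thue's Lemma but with an \emph{asymmetric} box adapted to the ranges of $x$ and $y$. Write $\beta = \lf p/\alpha \rf$ and consider the collection of all pairs $(x,y)$ of integers with $0 \le x \le \alpha$ and $0 \le y \le \beta$, together with the map $(x,y) \mapsto ax - y \pmod p$ into $\Z/p\Z$. The first step is a counting inequality: there are $(\alpha+1)(\beta+1)$ such pairs, and since $\beta = \lf p/\alpha \rf > p/\alpha - 1$ we have $\beta + 1 > p/\alpha$, whence $(\alpha+1)(\beta+1) > (\alpha+1)\,p/\alpha > p$. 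As there are only $p$ residue classes modulo $p$, the pigeonhole principle produces two distinct pairs $(x_1,y_1) \ne (x_2,y_2)$ with $ax_1 - y_1 \equiv ax_2 - y_2 \pmod p$.

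Subtracting gives $a(x_1 - x_2) \equiv (y_1 - y_2) \pmod p$. I would set $x = x_1 - x_2$ and $y = y_1 - y_2$, and after replacing $(x,y)$ by $(-x,-y)$ if necessary assume $x \ge 0$, so that $0 \le x \le \alpha$ and $-\beta \le y \le \beta$. It then remains to eliminate the two degenerate possibilities. If $x = 0$, then $y \equiv 0 \pmod p$; assuming $\alpha \ge 2$ we have $|y| \le \beta \le \lf p/2 \rf < p$, which forces $y = 0$ and hence $(x_1,y_1) = (x_2,y_2)$, contradicting distinctness. If instead $y = 0$, then $ax \equiv 0 \pmod p$, and since $p \nmid a$ this yields $x \equiv 0 \pmod p$; but $|x| \le \alpha \le p-1 < p$, so again $x = 0$ and the two pairs coincide. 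Therefore $1 \le x \le \alpha$ and $1 \le |y| \le \beta$, and relabelling $|y|$ as $y$ gives $ax \equiv \pm y \pmod p$ with $x$ and $y$ in the required ranges; the equivalent form $a \equiv \pm y/x \pmod p$ follows since $p \nmid x$.

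The degeneracy argument above needs $\beta < p$, which holds exactly when $\alpha \ge 2$, so the boundary case $\alpha = 1$ must be dispatched separately; there it is immediate, since $y$ ranges over $\{1,\ldots,p\}$ and one simply takes $x = 1$ together with the representative of $a$ in $\{1,\ldots,p-1\}$, using that $p \nmid a$. I expect the only genuinely delicate points to be this boundary bookkeeping and the sign normalization that places $x$ in $\{1,\ldots,\alpha\}$ (rather than allowing $x$ negative); the counting inequality and the removal of the degenerate collisions are otherwise routine.
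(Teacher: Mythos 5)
Your proof is correct. The paper does not actually supply a proof of this classical lemma (it only cites Vinogradov and remarks that the lemmas of Thue and Vinogradov are ``clever applications of Dirichlet's box principle''), and your asymmetric pigeonhole argument --- counting the $(\alpha+1)(\beta+1)>p$ pairs, removing the two degenerate collisions, and dispatching the $\alpha=1$ boundary case separately --- is exactly that standard argument.
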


Vinogradov's result was generalized to multiple congruences by Brauer and Reynolds in \cite{BR} where they provide a complete historic review of re-discoveries 
and generalizations of the Thue-Vinogradov lemma, up to 1951. In the same paper they proved the following result \cite[Theorem 4]{BR}.

\begin{theorem}\label{cong}
Let $g$ and $k$ be positive integers where $k$ is even, $p$ an odd prime
with $p \equiv 1 \pmod{k}$ such that $g\leq p.$ We set $h = \lceil p/g\rceil.$ If $D$ is a $k$-th power
residue, then at least one of the numbers $1^k, 2^k,\ldots , h^k$ is congruent to one of the
numbers $D, 2^kD, \ldots , (g - 1)^kD.$
\end{theorem}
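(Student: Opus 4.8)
The plan is to follow the classical Dirichlet box principle route used by Brauer and Reynolds, since the hypothesis that $D$ is a $k$-th power residue lets me linearise the target congruence before any counting. First I would fix $c \in \mathbb{F}_p^*$ with $D \equiv c^k \pmod p$; such a $c$ exists precisely because $D$ is a $k$-th power residue, and $c \not\equiv 0$. The role of $c$ is that the multiplicative relation $x^k \equiv y^k D$ will be produced by raising an additive relation $x \equiv \pm c y$ to the $k$-th power: here the hypothesis that $k$ is even is exactly what collapses the sign ambiguity, turning $x \equiv \pm cy$ into $x^k \equiv c^k y^k \equiv D\,y^k \pmod p$.

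Next I would apply the box principle to the linear form $\ell(x,y) = x - cy \bmod p$. Consider all pairs $(x,y)$ with $0 \le x \le h$ and $0 \le y \le g-1$; there are $(h+1)g$ of them. Because $h = \lceil p/g\rceil \ge p/g$ we have $hg \ge p$, hence $(h+1)g = hg + g > p$, so two distinct pairs $(x_1,y_1) \neq (x_2,y_2)$ must satisfy $\ell(x_1,y_1) \equiv \ell(x_2,y_2)$. Setting $X = x_1 - x_2$ and $Y = y_1 - y_2$ then gives $X \equiv cY \pmod p$ with $|X| \le h$ and $|Y| \le g-1$.

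The one verification that needs a little care is that both $X$ and $Y$ are nonzero. This is immediate: if $Y = 0$ then $X \equiv 0 \pmod p$, and since $|X| \le h \le \lceil p/2\rceil < p$ (using $g \ge 2$, without which the list $D, 2^kD, \ldots, (g-1)^kD$ is empty) this forces $X = 0$, contradicting $(x_1,y_1) \neq (x_2,y_2)$; symmetrically, if $X = 0$ then $cY \equiv 0$ and the invertibility of $c$ forces $Y = 0$, again a contradiction. Hence $1 \le |X| \le h$ and $1 \le |Y| \le g-1$.

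Finally I would raise $X \equiv cY \pmod p$ to the $k$-th power. Since $k$ is even, $X^k = |X|^k$ and $Y^k = |Y|^k$, so
\[
  |X|^k \equiv c^k |Y|^k \equiv D\,|Y|^k \pmod p .
\]
Thus $|X|^k$ is one of $1^k, 2^k, \ldots, h^k$ and $D\,|Y|^k$ is one of $D, 2^kD, \ldots, (g-1)^kD$, which is exactly the asserted congruence. I expect the main (and essentially only) obstacle to be the bookkeeping in the box principle: arranging the $x$-range to have $h+1$ values and the $y$-range to have $g$ values so that the differences land in $\{1,\ldots,h\}$ and $\{1,\ldots,g-1\}$ respectively, while still forcing a collision via $(h+1)g > p$. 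Everything else — the passage from the additive relation to the multiplicative one, which uses only that $k$ is even, and the non-degeneracy of $X,Y$ — is forced once $c$ and the correct ranges are in place.
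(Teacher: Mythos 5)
Your proof is correct. Note, though, that the paper never proves Theorem~\ref{cong} itself: it is quoted from Brauer and Reynolds \cite{BR} (and Porcelli--Pall \cite{PP}), and what the paper actually proves is the variant Theorem~\ref{pair_the} in Section~\ref{Pairs}, with a smaller $h$ in part of the range. Your route is the classical one: linearise via $D\equiv c^k$, apply the box principle to $x-cy$ over the grid $\{0,\dots,h\}\times\{0,\dots,g-1\}$ (where $(h+1)g>p$ forces a collision), and use that $k$ is even to kill the sign when raising $X\equiv cY$ to the $k$-th power. This is essentially the Thue--Vinogradov pigeonhole argument and matches the spirit of the original Brauer--Reynolds proof; your bookkeeping (nondegeneracy of $X,Y$ via $h<p$ for $g\ge 2$, and the implicit assumption $g\ge 2$ without which the conclusion is vacuously false) is all in order. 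The paper's approach to the analogous statement replaces the pigeonhole step by Theorem~\ref{Main}, i.e.\ by the R\'edei-polynomial/Stepanov count of the exceptional set $D$ of non-representable ratios: since the $k$ solutions of $a^k\equiv D$ only need to avoid a set of size at most $p-1-\alpha(\beta+1)$, one can take $(g-1)(h+1)+1\ge p-k$ rather than $(g-1+1)(h+1)>p$, which is what buys the sharper $h=\lceil (p-k-g)/(g-1)\rceil$ when $g(k+g)\ge p$. In short: your argument is a valid and more elementary proof of the stated (weaker) theorem, but it cannot be pushed to the paper's improved version, because pigeonhole alone gives no control when the number of pairs drops below $p$.
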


Theorem \ref{cong} was also proved, independently, by Porcelly and Pall using Farey sequences in \cite{PP}.
We are going to prove an improvement on this theorem in Section \ref{Pairs}.

\section{The Extension}

The Thue-Vinogradov lemma is about initial segments providing solutions to $ax  \equiv \pm y \pmod{p}$ for all $a.$ What can we say about shorter segments? We 
are going to use the polynomial method -- in this case the R\'edei polynomial -- to prove that initial segments of $\mathbb{F}_p$ give many solutions to the above congruence. 
R\'edei polynomials were used in number theory, group theory, and in the geometry of finite fields. There is a nice survey on basic theorems and examples to such 
applications of the R\'edei polynomial (and other algebraic methods in combinatorics) in \cite{Al}.

\begin{theorem}\label{Main}
Let $p$ be a prime. For any $\alpha,\beta\in \mathbb{N},$ $\alpha(\beta+1)\leq p-1,$ there are at least
$\alpha(\beta+1)$ distinct
$a\in\mathbb{F}_p^*$ for which there are $x,y$
\[
x\in I_\alpha=\left\{1,2,\ldots ,\alpha\right\},\quad y\in I_\beta=\left\{1,2,\ldots ,\beta\right\}
\]
such that 

\begin{equation}\label{mod}
  ax \equiv \pm y \pmod{p}.
\end{equation}

\end{theorem}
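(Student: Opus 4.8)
\noindent The plan is to recast the statement as a question about \emph{directions} (slopes through the origin) and then to run the R\'edei polynomial / Stepanov machinery on a grid. Writing $ax\equiv\pm y$ as $a\equiv c\,x^{-1}$ with $c=\pm y\in\{\pm1,\dots,\pm\beta\}$, the set of admissible $a$ is exactly
$$A=\{\,c\,x^{-1} : x\in I_\alpha,\ c\in\{\pm1,\dots,\pm\beta\}\,\}\subseteq\mathbb{F}_p^*,$$
the set of nonzero slopes of the lines joining the origin to the points of the grid $G=I_\alpha\times\{-\beta,\dots,\beta\}$. Adjoining the slope $0$ (the column $c=0$), the theorem becomes the bound $|A|+1\ge\alpha(\beta+1)+1$ on the number of distinct directions from the origin to $G$. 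Since $|G|=\alpha(2\beta+1)$, this is equivalent to saying that the total multiplicity excess $\sum_{a}(\mathrm{mult}(a)-1)=|G|-(|A|+1)$ is at most $\alpha\beta-1$, where $\mathrm{mult}(a)$ counts the grid points on the slope-$a$ line through the origin. First I would record the arithmetic dichotomy governing coincidences: two grid points give the same direction iff $c_1x_2\equiv c_2x_1\pmod p$, and because $\alpha\beta\le p-1$ this forces either the genuine integer identity $c_1x_2=c_2x_1$ (equal signs) or the ``wrap-around'' relation $|c_1|x_2+|c_2|x_1=p$ (opposite signs); the hypothesis $\alpha(\beta+1)\le p-1$ is precisely what confines the second type.

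Next I would introduce the R\'edei polynomial of the grid,
$$R(X,Y)=\prod_{(x,c)\in G}\bigl(X-c+xY\bigr),$$
of degree $|G|=\alpha(2\beta+1)$. Specializing the slope to $Y=a$, a factor becomes $X-(c-xa)$, which contributes the root $X=0$ exactly when $(x,c)$ lies on the line of slope $a$ through the origin; hence the multiplicity of $X=0$ in $R(X,a)$ equals $\mathrm{mult}(a)$. Because the $x$-coordinates of collinear grid points are distinct, every direction satisfies $\mathrm{mult}(a)\le\alpha$, and the crude pigeonhole estimate $|A|+1\ge|G|/\alpha=2\beta+1$ follows at once. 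The whole difficulty is to improve this to $\alpha(\beta+1)+1$, i.e.\ to show that only few directions can carry large multiplicity.

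The improvement is where Stepanov's bound enters. A direction of high multiplicity is a reduced fraction $c_0/x_0$ whose successive multiples $c_0,2c_0,\dots$ all stay inside $\{-\beta,\dots,\beta\}$ while $x_0,2x_0,\dots$ stay inside $I_\alpha$; such long fibers are rigid, and one can encode ``too many directions collapsing'' as a polynomial vanishing to order $m$ on a set of size $n$, whereupon the bound $n\le d/m$ caps the admissible collapse. Summing these caps over all fibers---equivalently, carrying out the Farey-type count of equal-sign coincidences and adding the controlled contribution of the wrap-around coincidences from the first paragraph---should deliver the excess bound $\sum_a(\mathrm{mult}(a)-1)\le\alpha\beta-1$ and hence $|A|\ge\alpha(\beta+1)$. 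The extremal cases (for instance $A=\mathbb{F}_p^*$ when $\alpha(\beta+1)=p-1$) indicate the bound is sharp and that no slack is available. I expect the main obstacle to be exactly the opposite-sign, wrap-around coincidences: they are the only identifications not accounted for by rational equality, and verifying that $\alpha(\beta+1)\le p-1$ restricts them tightly enough---so that Stepanov's multiplicity inequality still closes the gap---is the delicate point of the argument.
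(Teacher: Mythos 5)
You have correctly reformulated the theorem as a lower bound on the number of directions from the origin to a grid, and you have written down an appropriate R\'edei polynomial; up to a change of variables this is the same object the paper uses, namely $H(x,y)=\prod_{0\le k\le\alpha,\,0\le j\le\beta}(x+ky-j)$. But the proposal stops exactly where the proof has to begin. The engine of the R\'edei--Sz\H{o}nyi method is not the multiplicity of a single root of a single specialization (that only reproduces your pigeonhole bound $2\beta+1$); it is the following chain, entirely absent from your write-up: (i) for every \emph{bad} slope $b$ (one admitting no representation $bx\equiv\pm y$) the specialization $H(x,b)$ has all roots distinct, hence divides $x^p-x$, and can be completed by an auxiliary factor $f$ to $F(x,b)=f(x,b)H(x,b)=x^p-x$ exactly; (ii) writing $F(x,y)=x^p+h_1(y)x^{p-1}+\cdots+h_p(y)$ one has $\deg h_i\le i$, and since every $h_i$ vanishes on the whole bad set $D$, any index $i$ with $h_i\not\equiv 0$ forces $|D|\le i$; (iii) the existence of a \emph{small} such index is certified by specializing to $y=0$, where $F(x,0)=f(x,0)\bigl(\prod_{j=0}^{\beta}(x-j)\bigr)^{\alpha+1}$, so that $F'(x,0)$ vanishes to order $\ge\alpha$ at $\beta+1$ points while having degree $p-i-1$; Stepanov's inequality then gives $p-i-1\ge\alpha(\beta+1)$, i.e.\ $|D|\le i\le p-1-\alpha(\beta+1)$, which is the theorem. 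Your proposal never uses the divisibility of $x^p-x$ by the bad specializations, never introduces the coefficient polynomials $h_i$, and applies Stepanov only as a slogan.

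What you substitute for this --- a direct bound on the coincidence excess $\sum_a(\mathrm{mult}(a)-1)\le\alpha\beta-1$ via a Farey-type count of integer identities $c_1x_2=c_2x_1$ plus a separate treatment of the wrap-around relations $|c_1|x_2+|c_2|x_1=p$ --- is a genuinely different strategy, and it is not carried out. The equal-sign count alone is delicate (the number of visible lattice points gives only an asymptotic $\tfrac{12}{\pi^2}\alpha\beta$ heuristic, which the paper relegates to Remark \ref{rem_uj} under the much stronger hypothesis $2\alpha^2<p$), and in the regime $\alpha\beta>p/2$ the wrap-around coincidences really do occur and can be numerous; you acknowledge this is ``the delicate point'' but offer no mechanism to control it, and it is not clear the hypothesis $\alpha(\beta+1)\le p-1$ alone suffices for such an elementary count, especially since the bound must be exactly tight (e.g.\ when $\alpha(\beta+1)=p-1$). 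As it stands the argument establishes only $|A|\ge 2\beta+1$; to repair it you should abandon the direct counting and run the coefficient/derivative argument sketched above.
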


\noindent
In Vinogradov's Lemma if $\alpha(\beta +1)>p,$ then the conclusion of the theorem holds for every $a\in\mathbb{F}_p^*,$ even with $y\in \left\{1,2,\ldots ,\beta-1\right\},$ so there are infinitely many cases when Vinogradov's Lemma gives a better bound (by one) if one needs to capture every $a\in\mathbb{F}_p^*.$ The importance of Theorem \ref{Main} is that it covers the range when $\alpha\beta<p,$ when simple pigeonhole arguments won't work.

\medskip
\begin{proof}
Denote $D\subset\mathbb{F}_p^*$ the set of elements $a$ which are not expressible as in (\ref{mod}).   The key of the argument is the construction of a polynomial following R\'edei \cite{Re} and Sz\H{o}nyi \cite{Szo}. Their method was specialized to Cartesian products in \cite{BSW}, in a way that we are going to follow here. The polynomial is defined as 

\[
H(x,y)=\prod_{i=0}^{{\beta}}\left(x-i\right)\prod\limits_{\scriptstyle 1\leq k\leq \alpha \atop\scriptstyle 0\leq j \leq \beta\hfill}(x+ky-j)=\prod\limits_{\scriptstyle 0\leq k\leq \alpha \atop\scriptstyle 0\leq j \leq \beta\hfill}(x+ky-j)
\]

 The important feature of the polynomial above is that whenever $b\in D,$ all roots of $H(x,b)$ are distinct elements of $\mathbb{F}_p,$
i.e. $H(x,b)$ divides $x^p-x.$ To see that, let us consider the two possible cases of repeated roots below

\renewcommand{\theenumiii}{\Roman{enumii}}

\begin{enumerate}
    \item If the second product term (with $y$-s) had two equal roots then we had 
    \begin{equation*}
  -kb+j \equiv  -k'b+j' \pmod{p},
\end{equation*}
for some $1\leq k,k'\leq\alpha$ and $0\leq j,j'\leq\beta .$ If $k=k'$ then $j=j',$ but then the two linear terms are the same which is not possible.
Note that $b\neq 0$ so
\begin{equation*}
 |k-k'|b \equiv \pm (j'-j) \pmod{p},
\end{equation*}
contradicting to the assumption $b\in D.$

    \item The remaining case is when 
\begin{equation*}
  -kb+j \equiv  j' \pmod{p},
\end{equation*}
for some $1\leq k\leq\alpha$ and $0\leq j,j'\leq\beta, $  leading to   
\begin{equation*}
  kb \equiv \pm (j'-j) \pmod{p},
\end{equation*}
contradicting to the assumption $b\in D.$

\end{enumerate}

The degree of $H$ is $\delta=\alpha\beta+\alpha+\beta+1.$ In particular, when $\alpha=\beta$ then the degree is $(\alpha+1)^2.$ It was Sz\H{o}nyi's observation in \cite{Szo} (see also in \cite{Szo2}) that there is an auxiliary polynomial of degree $p-\delta,$ denoted by $f(x,y),$ such that

\begin{equation}\label{lac}
F(x,b)=f(x,b)H(x,b)=x^p-x \quad {\text  if   } \quad b\in D .
\end{equation}

For the details on how to find $f,$ we refer to \cite{Szo} and \cite{BSW}. Let us consider $F(x,y)$ as a polynomial of $x$ with coefficients $h_i(y)\in \mathbb{F}_p[y].$

\[
F(x,y)=f(x,y)H(x,y)=F_y(x)=x^p+h_1(y)x^{p-1}+h_2(y)x^{p-2}+\ldots +h_p(y)
\]

where the degree of $h_i$ is at most $i.$ From (\ref{lac}) one can see that $h_i(y)$-s are zero for many $y$ values, whenever $y\in D.$ If $h_i(y)=0$ for more than $i$ distinct $y$ values then $h_i(y)\equiv 0.$ This is the crucial point of the application of R\'edei's method. If one can show that $h_i\not\equiv 0$ for some $i,$ then $|D|\leq i.$ When $|D|$ is small, one could use R\'edei's theorem, which describes the structure of fully reducible lacunary polynomials (like in \cite{Szo}), however we follow a simpler calculation which gives a better bound in this case. Let us check the polynomial $F(x,y)$ when $y=0.$

\begin{equation}\label{prod}
\begin{split}
    F(x,0) & =f(x,0)\left(\prod_{i=0}^{\beta}(x-i)\right)^{\alpha+1}\\
    & =x^p+c_1x^{p-1}+c_2x^{p-2}+\ldots +c_p.
\end{split}    
\end{equation}

We need to show that a polynomial with form like in (\ref{prod}) has a nonzero $c_i$ coefficient for some, not too large $i.$ Let $c_i$ denote the nonzero coefficient with the smallest index $i.$ Checking the derivatives based on the first and second rows, we see that $F'(x,0)$ will vanish with multiplicity at least $\alpha$ on at least $\beta+1$ places and it has degree $p-i-1.$ This implies that 
$p-i-1\geq \alpha(\beta+1)$ and then $|D|\leq i\leq p-1-\alpha(\beta+1)$ as needed.  \qed
\end{proof}

\begin{remark}\label{rem1}
Theorem \ref{Main} was stated for initial segments, but the same proof works if one requires 
\[
x\in \mu I_\alpha =\left\{\mu,2\mu,\ldots ,\alpha\mu\right\},\quad y\in \nu I_\beta=\left\{\nu,2\nu,\ldots ,\beta\nu\right\}
\]
for some $\nu,\mu\in\mathbb{N}$ values, where $p\nmid \nu\mu $.
\end{remark}

\begin{remark}\label{rem_uj}
It was noted by the anonymous referee and other readers of an earlier version of this paper that Theorem \ref{Main} can be improved for shorter initial segments. For example if
\[
x,y\in  I_\alpha =\left\{1,2,\ldots ,\alpha\right\},
\]
and $2\alpha^2<p,$ then the number of distinct $a\in \mathbb{F}_p^*,$ such that $a\equiv \pm x/y \pmod{p}$ is twice the number of (ordered) pairs $(u,v)\in {\mathbb{N}}^2,$ where $(u,v)=1,$ and $u,v\leq \alpha,$ which is asymptotically $\frac{12}{\pi^2}\alpha^2\sim 1.21 \alpha^2$  (See e.g. Exercise 21 b, Chapter II in \cite{Vi_3}). 

\end{remark}

Let us denote the difference set of $A\subset\mathbb{F}_p$ by $\Bar{A},$

\[
\Bar{A}=A-A=\{a-b | a,b\in A\}.
\]

Using the above notation we can state a more general theorem with slightly weaker bounds. It is practically the same as Theorem 1 in \cite{BSW}, we include it here for completeness.

\begin{theorem}\label{Main2}
Let $p$ be a prime. For any $A,B\subset \mathbb{F}_p,$ where $|A|=\alpha,|B|=\beta,$ there are at least
\[
\min(p,(\alpha-1)\beta+1)
\]
$a\in\mathbb{F}_p$ for which there are 
$
x\in \Bar{A}\setminus \{0\}, y\in \Bar{B}
$ such that 
$  ax \equiv y \pmod{p}.
$
\end{theorem}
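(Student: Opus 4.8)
The plan is to run the Rédei-polynomial argument of Theorem \ref{Main} verbatim, but with the two initial segments replaced by the arbitrary sets $A$ and $B$. Write $D\subset\mathbb{F}_p$ for the set of \emph{exceptional} elements $a$, those for which $ax\equiv y\pmod p$ has no solution with $x\in\Bar{A}\setminus\{0\}$, $y\in\Bar{B}$; the goal is to bound $|D|$, since the number of good $a$ is $p-|D|$. The first step is to record the combinatorial meaning of exceptionality. Form the $\alpha\beta$ field elements $at+b$ with $a\in A$, $b\in B$. A coincidence $at+b=a't+b'$ with $(a,b)\neq(a',b')$ forces $a\neq a'$, hence $t(a-a')=b'-b$ with $a-a'\in\Bar{A}\setminus\{0\}$ and $b'-b\in\Bar{B}$; but that is precisely a solution of the congruence. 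Consequently, for every $t\in D$ the $\alpha\beta$ elements $at+b$ are pairwise distinct.

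Next I introduce the Rédei polynomial
\[
H(x,y)=\prod_{a\in A}\prod_{b\in B}\bigl(x-ay-b\bigr),
\]
which is monic of degree $\alpha\beta$ in $x$, and whose coefficient of $x^{\alpha\beta-i}$ is (up to sign) the $i$-th elementary symmetric function of the linear forms $ay+b$, hence a polynomial in $y$ of degree at most $i$. By the previous paragraph, for $t\in D$ the specialisation $H(x,t)$ is a product of $\alpha\beta$ distinct linear factors over $\mathbb{F}_p$, so $H(x,t)\mid x^p-x$. As in Theorem \ref{Main}, Szőnyi's construction (see \cite{Szo}, \cite{BSW}) then supplies an auxiliary polynomial $f(x,y)$ of degree $p-\alpha\beta$ in $x$ such that
\[
F(x,y)=f(x,y)H(x,y)=x^p+h_1(y)x^{p-1}+\cdots+h_p(y),\qquad \deg h_i\le i,
\]
and $F(x,t)=x^p-x$ for every $t\in D$. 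In particular $h_i(t)=0$ for all $t\in D$ whenever $1\le i\le p-2$, so if some such $h_i$ is not identically zero it vanishes on all of $D$ and therefore $|D|\le\deg h_i\le i$.

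It remains to locate a small index $i$ with $h_i\not\equiv0$, and here I specialise at $y=0$ exactly as in Theorem \ref{Main}. Since $H(x,0)=\prod_{b\in B}(x-b)^{\alpha}$, the polynomial $F(x,0)=x^p+c_1x^{p-1}+\cdots+c_p$ with $c_i=h_i(0)$ vanishes at each of the $\beta$ points of $B$ with multiplicity at least $\alpha$. Let $c_i$ be the nonzero coefficient of smallest index. In characteristic $p$ the leading term $x^p$ differentiates to $0$, so $F'(x,0)$ has degree exactly $p-i-1$, while it still vanishes at each point of $B$ with multiplicity at least $\alpha-1$. Stepanov's bound (a nonzero degree-$d$ polynomial vanishing at $n$ points each with multiplicity $m$ needs $d\ge nm$) then yields $(\alpha-1)\beta\le p-i-1$, i.e. $i\le p-1-(\alpha-1)\beta$, whence $|D|\le i$ and the number of non-exceptional $a$ is at least $(\alpha-1)\beta+1$. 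When $(\alpha-1)\beta+1>p$ the same inequality forces $i\le 0$, which is impossible, so in that regime $D=\varnothing$ and every $a$ works; this is exactly the $\min(p,\cdot)$ appearing in the statement.

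The step I expect to be most delicate is the existence of the auxiliary polynomial $f$ respecting the degree bound $\deg h_i\le i$: this Rédei–Szőnyi input, borrowed here from \cite{Szo} and \cite{BSW}, is precisely what makes each coefficient $h_i$ a genuine low-degree polynomial in $y$ and thereby legitimises the zero-counting step. A secondary point requiring care is that the Stepanov argument presumes $F'(x,0)\not\equiv0$; one checks that $F'(x,0)\equiv0$ would force $F(x,0)=(x+c_p)^p$ and hence $\beta=1$, so the argument is valid whenever $\beta\ge2$, and the remaining degenerate cases ($\alpha=1$ or $\beta=1$, where $\Bar{A}\setminus\{0\}$ or $\Bar{B}$ collapses) are either vacuous or absorbed by the $\min$ with $p$.
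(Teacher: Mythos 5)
Your proposal is correct and follows essentially the same route as the paper: the same R\'edei polynomial over $A\times B$, the same Sz\H{o}nyi auxiliary factor $f$ with $\deg h_i\le i$, the same specialisation at $y=0$ giving $\left(\prod_{b\in B}(x-b)\right)^{\alpha}$ as a factor, and the same derivative/Stepanov count yielding $|D|\le p-1-(\alpha-1)\beta$. Your extra remarks on the degenerate cases ($F'(x,0)\equiv0$, $\alpha=1$ or $\beta=1$) are sensible additions that the paper leaves implicit, but they do not change the argument.
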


Note that since $\Bar{A}$ and $\Bar{B}$ are symmetric about 0, we don't need the $\pm$ sign in the modular equation. The proof, which we are going to sketch below follows the proof of Theorem \ref{Main}.

\medskip
\begin{proof}
For $a=0$ the trivial solution, $  ax \equiv b-b \pmod{p},$ works with any $x\in \Bar{A}, b\in B.$  Let us denote $D\subset\mathbb{F}_p^*$ the set of elements $a$ which are not expressible as $  ax \equiv  y \pmod{p}.$  
%Since shifts of $A$ and $B$ don't change the sets $\Bar{A}$ and $\Bar{B},$ we can assume that $0\in A$ and $0\in B.$  
The R\'edei polynomial is now defined as 

\begin{equation}\label{Red+}
H(x,y)=\prod\limits_{\scriptstyle 1\leq k\leq \alpha \atop\scriptstyle 1\leq j \leq \beta\hfill}(x+a_ky-b_j).
\end{equation}

Whenever $d\in D,$ all roots of $H(x,d)$ are distinct elements of $\mathbb{F}_p,$
i.e. $H(x,d)$ divides $x^p-x.$ If we had $(x+a_kd-b_j)=(x+a_\ell d-b_s)$ then $  (a_k-a_\ell) d \equiv b_j-b_s \pmod{p},$ contradicting the selection $d\in D.$ 
The degree of $H$ is $\delta=\alpha\beta.$ There is an auxiliary polynomial of degree $p-\delta,$ denoted by $f(x,y),$ such that

\begin{equation}\label{lac2}
F(x,d)=f(x,d)H(x,d)=x^p-x \quad {\text  if   } \quad d\in D .
\end{equation}

Let us consider $F(x,y)$ as a polynomial of $x$ with coefficients $h_i(y)\in \mathbb{F}_p[y].$

\[
F(x,y)=f(x,y)H(x,y)=F_y(x)=x^p+h_1(y)x^{p-1}+h_2(y)x^{p-2}+\ldots +h_p(y)
\]

where the degree of $h_i$ is at most $i.$ If we show that $h_i\not\equiv 0$ for some $i,$ then $|B|\leq i.$  The polynomial  when $y=0$ is

\begin{equation}\label{prod2}
\begin{split}
    F(x,0) & =f(x,0)\left(\prod_{i=1}^{\beta}(x-b_i)\right)^{\alpha}\\
    & =x^p+c_1x^{p-1}+c_2x^{p-2}+\ldots +c_p.
\end{split}    
\end{equation}

Let $c_i$ denote the nonzero coefficient with the smallest index $i.$ Checking the derivatives based on the first and second rows, we see that $F'(x,0)$ will vanish with multiplicity at least $\alpha-1$ on at least $\beta$ places and it has degree $p-i-1.$ This implies that 
$p-i-1\geq (\alpha-1)\beta$ and then $|D|\leq i\leq p-1-(\alpha-1)\beta$ as needed.  \qed
\end{proof}

\medskip

Let $d>1$ be a divisor of $p-1$ and let $Z_d$ be a multiplicative subgroup of size $d$ inside $GF(p)$. If there is an $A\subset \mathbb{F}_p$ such that $\Bar{A}\subset \{Z_d\cup 0\}$ then by  applying Theorem \ref{Main2} with $A=B$ we obtain the following result, which was recently proved by Hanson and Petridis \cite{HP}. (See also Theorem 1. in \cite{BSW})

\begin{corollary}\label{Cor}
Let $A\subset \mathbb{F}_p$ be a set such that $A-A\subset Z_d\cup\{0\}$. Then
\[ |A|(|A|-1)\leq d.\]
\end{corollary}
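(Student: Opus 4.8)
The plan is to derive Corollary \ref{Cor} as a direct specialization of Theorem \ref{Main2}, so essentially no new machinery is needed; the work lies in correctly interpreting the hypothesis $A-A\subseteq Z_d\cup\{0\}$ inside that framework. First I would apply Theorem \ref{Main2} with $B=A$, so that $\alpha=\beta=|A|$. The theorem then guarantees at least $\min\(p,(\alpha-1)\alpha+1\)$ distinct elements $a\in\mathbb{F}_p$ admitting a representation $ax\equiv y\pmod p$ with $x\in\Bar{A}\setminus\{0\}$ and $y\in\Bar{A}$. The key observation is that the hypothesis forces every such admissible $a$ to lie in a very small set.

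The main step is to pin down exactly which $a$ can arise. Since $\Bar{A}=A-A\subseteq Z_d\cup\{0\}$, any valid $x\in\Bar{A}\setminus\{0\}$ already lies in $Z_d$, and any valid $y\in\Bar{A}$ lies in $Z_d\cup\{0\}$. If $y=0$ then $a=0$; if $y\neq 0$ then $y\in Z_d$, and since $Z_d$ is a multiplicative subgroup we get $a\equiv y/x\in Z_d$. Hence every representable $a$ belongs to $Z_d\cup\{0\}$, a set of size exactly $d+1$. I would also note that $a=0$ is always representable via the trivial solution highlighted at the start of the proof of Theorem \ref{Main2}, so the count of representable $a$ is at most $|Z_d\cup\{0\}|=d+1$.

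Combining the lower bound from Theorem \ref{Main2} with this upper bound yields $\min\(p,(\alpha-1)\alpha+1\)\le d+1$. Because $d\mid p-1$ with $d>1$ we have $d+1\le p$, so the minimum cannot be realized by the $p$ branch in the relevant regime; I would argue that $(\alpha-1)\alpha+1\le d+1$, which rearranges to $|A|(|A|-1)\le d$, exactly the claimed inequality. (If instead the $p$ branch were active, then $p\le d+1\le p$ forces equality and the bound holds trivially as well, so the conclusion is safe in every case.)

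The step I expect to require the most care is the bookkeeping around the ``$+1$'' and the trivial solution. Theorem \ref{Main2} counts representable $a$ \emph{including} $a=0$, and its guaranteed count is $(\alpha-1)\beta+1$ rather than $(\alpha-1)\beta$; meanwhile the upper bound $d+1$ also includes the $a=0$ contribution. I would verify that these two ``$+1$''s cancel cleanly so that the final inequality is the sharp $|A|(|A|-1)\le d$ and not a weaker off-by-one version. Once that accounting is confirmed, the corollary follows with no further computation.
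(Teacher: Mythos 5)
Your argument is exactly the paper's: the corollary is derived in one line by applying Theorem \ref{Main2} with $A=B$ and observing that every representable $a$ lies in $Z_d\cup\{0\}$, so $(|A|-1)|A|+1\le d+1$, which is precisely your accounting of the two ``$+1$''s. The only quibble is your parenthetical about the $p$-branch: there the conclusion is not trivially safe (for $d=p-1$ the hypothesis is vacuous and the bound fails, e.g.\ for $A=\mathbb{F}_p$), but this degenerate case is equally unaddressed in the paper, so your proof matches the intended one.
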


\medskip
 A slightly stronger statement in Theorem \ref{Main2} holds when $0\not\in A.$

\begin{theorem}
Let $A\subset \mathbb{F}_p^*,$ $B\subset \mathbb{F}_p,$ where $|A|=\alpha,|B|=\beta.$ There are at least
\[
\min(p,\alpha\beta+1)
\]
$a\in\mathbb{F}_p$ for which there are 
$
x\in \left\{ \{A\cup\Bar{A}\}\setminus \{0\}\right\},$ and $ y\in \Bar{B}$ 
 such that 
$  ax \equiv y \pmod{p}.
$
\end{theorem}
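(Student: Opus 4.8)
The plan is to run the proof of Theorem~\ref{Main2} essentially verbatim, changing only the R\'edei polynomial so as to gain one extra unit of vanishing multiplicity at $y=0$, which upgrades $(\alpha-1)\beta$ to $\alpha\beta$. Write $A=\{a_1,\dots,a_\alpha\}\subset\mathbb{F}_p^*$ and $B=\{b_1,\dots,b_\beta\}$, put $a_0:=0$, and let $D\subset\mathbb{F}_p^*$ be the set of $a$ admitting no representation $ax\equiv y$ with $x\in\{A\cup\Bar{A}\}\setminus\{0\}$ and $y\in\Bar{B}$ (the value $a=0$ is always representable, via $y=0\in\Bar{B}$). In place of~(\ref{Red+}) I would take
\[
H(x,y)=\prod_{k=0}^{\alpha}\prod_{j=1}^{\beta}\bigl(x+a_ky-b_j\bigr),
\]
which differs from~(\ref{Red+}) only by the extra factor $\prod_{j=1}^{\beta}(x-b_j)$ coming from the index $k=0$. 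Its degree is $(\alpha+1)\beta$, and at $y=0$ it becomes $\bigl(\prod_{j=1}^{\beta}(x-b_j)\bigr)^{\alpha+1}$, one power higher than the factor appearing in~(\ref{prod2}).

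The step I expect to require the most care is re-verifying that $H(x,d)$ has only simple roots whenever $d\in D$, now that $k$ is allowed to equal $0$. A coincidence $b_j-a_kd=b_{j'}-a_{k'}d$ with $(k,j)\neq(k',j')$ forces $k\neq k'$ and hence $(a_{k'}-a_k)d\equiv b_{j'}-b_j\pmod p$. If $k,k'\ge 1$ this reproduces the case already handled in Theorem~\ref{Main2}, with $a_{k'}-a_k\in\Bar{A}\setminus\{0\}$. The genuinely new case is when one index, say $k$, is $0$: then $a_{k'}-a_0=a_{k'}\in A$, and this is nonzero precisely because $0\notin A$. In either case $a_{k'}-a_k\in\{A\cup\Bar{A}\}\setminus\{0\}$ and $b_{j'}-b_j\in\Bar{B}$, so the coincidence would exhibit $d$ as representable, contradicting $d\in D$. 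This is the only place where the hypothesis $0\notin A$ and the enlarged admissible set for $x$ enter, and it is exactly what licenses the extra $k=0$ factor; hence $H(x,d)$ splits into distinct linear factors over $\mathbb{F}_p$, i.e. $H(x,d)\mid x^p-x$.

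From here the argument is mechanical. By the Sz\H{o}nyi construction (\cite{Szo},\cite{BSW}), as in~(\ref{lac2}), there is an auxiliary $f(x,y)$ of degree $p-(\alpha+1)\beta$ with $F(x,d)=f(x,d)H(x,d)=x^p-x$ for $d\in D$; writing $F(x,y)=x^p+h_1(y)x^{p-1}+\dots+h_p(y)$ with $\deg h_i\le i$, every $d\in D$ is a root of each $h_i$ with $i\le p-2$. Specializing to $y=0$ gives $F(x,0)=f(x,0)\bigl(\prod_{j=1}^{\beta}(x-b_j)\bigr)^{\alpha+1}$, so each of the $\beta$ points $b_j$ is a root of $F(x,0)$ of multiplicity at least $\alpha+1$, hence a root of $F'(x,0)$ of multiplicity at least $\alpha$. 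If $c_i=h_i(0)$ denotes the nonzero coefficient of $F(x,0)$ of smallest index $i$, then $F'(x,0)$ is nonzero of degree $p-i-1$ and vanishes to total order at least $\alpha\beta$ on the $b_j$, whence $p-i-1\ge\alpha\beta$ and $|D|\le i\le p-1-\alpha\beta$; this gives at least $p-|D|\ge\alpha\beta+1$ representable values. Finally, if $(\alpha+1)\beta>p$ then $H(x,d)$ cannot have $(\alpha+1)\beta$ distinct roots in $\mathbb{F}_p$ for any $d$, so $D=\emptyset$ and all $p$ values are representable; together with the previous case this yields at least $\min(p,\alpha\beta+1)$ representable $a$, as claimed.
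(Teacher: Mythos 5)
Your proposal is correct and follows essentially the same route as the paper: you add the extra factor $\prod_{j}(x-b_j)$ (your $k=0$ term with $a_0=0$) to the R\'edei polynomial of Theorem \ref{Main2}, check that simplicity of the roots for $d\in D$ survives precisely because $0\notin A$, and read off the exponent $\alpha+1$ at $y=0$ to upgrade the bound to $\alpha\beta+1$. Your verification of the new root-coincidence case and the explicit handling of the $\min(p,\cdot)$ clause are slightly more detailed than the paper's sketch, but the argument is the same.
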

Indeed, in this case instead of polynomial (\ref{Red+}) we can use 

\begin{equation*}
H(x,y)=\prod_{\ell=1}^\beta (x-b_j)\prod\limits_{\scriptstyle 1\leq k\leq \alpha \atop\scriptstyle 1\leq j \leq \beta\hfill}(x+a_ky-b_j),
\end{equation*}

increasing the degree of $H(x,y)$ by $\beta.$ The roots are still distinct for any $d\in D,$ since $-b_\ell=a_id-b_j$ would lead to the $  ad \equiv y \pmod{p}$ equation where $x\in A$ and $y\in \Bar{B}.$ The polynomial  when $y=0$ now is

\begin{equation*}
    F(x,0)  =f(x,0)\left(\prod_{i=1}^{\beta}(x-b_i)\right)^{\alpha+1}
\end{equation*}
with the $\alpha+1$ exponent instead of $\alpha,$ leading to the improvement.

\section{Congruent pairs}\label{Pairs}
In this section we illustrate how to use Theorem \ref{Main} when we need many, almost $p$ solutions in (\ref{mod}). The proof is similar to classical applications of the Thue-Vinogradov inequality. We are going to show a variant of Theorem \ref{cong} stated in the introduction. 

\begin{theorem}\label{pair_the}
Let $g$ and $k$ be positive integers where $k$ is even, $p$ an odd prime
with $p \equiv 1 \pmod{k}$ such that $g\leq p.$ Let $h\in\mathbb{N}$ be a number given by \[h= \left\lceil\frac{p-k-g}{g-1}\right\rceil.\] 
If $D$ is a $k$-th power
residue, then at least one of the numbers $1, 2^k,\ldots , h^k$ is congruent to one of the
numbers $D, 2^kD, \ldots , (g - 1)^kD.$
\end{theorem}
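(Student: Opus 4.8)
The plan is to convert the congruence statement into a question about which elements of $\mathbb{F}_p^*$ lie in the solution set furnished by Theorem~\ref{Main}. Since $D$ is a $k$-th power residue and $p\equiv 1\pmod k$, the equation $x^k\equiv D$ has exactly $k$ solutions in $\mathbb{F}_p^*$; write $R$ for this set of $k$-th roots of $D$, so $|R|=k$. The elementary but decisive remark is that $u^k\equiv v^kD\pmod p$ (with $1\le v\le g-1$, so $v\not\equiv 0$) is equivalent to $(uv^{-1})^k\equiv D$, i.e.\ to $uv^{-1}\in R$. Thus the conclusion is precisely that some $a\in R$ is expressible as $a\equiv u/v$ with $u\in\{1,\dots,h\}$ and $v\in\{1,\dots,g-1\}$.

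I would then apply Theorem~\ref{Main} with $\alpha=g-1$ and $\beta=h$. It produces at least $(g-1)(h+1)$ elements $a\in\mathbb{F}_p^*$ for which $ax\equiv\pm y\pmod p$ for some $x\in\{1,\dots,g-1\}$ and $y\in\{1,\dots,h\}$; call these good, and let $B$ be the complementary bad set, so $|B|\le p-1-(g-1)(h+1)$. Here the hypothesis that $k$ is even is exactly what removes the ambiguous sign: if $a\in R$ is good, say $ax\equiv\pm y$, then $y^k=(\pm y)^k\equiv(ax)^k=a^kx^k\equiv Dx^k$, so setting $u=y\le h$ and $v=x\le g-1$ yields the desired congruence. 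It therefore suffices to prove that $R$ is not contained in $B$.

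The last step is a counting argument. As $|R|=k$, it is enough to control $|B|$ against $k$. The bound $|B|\le p-1-(g-1)(h+1)$ is at most $k$ exactly when $(g-1)(h+1)\ge p-1-k$, and this rearranges to $(g-1)h\ge p-k-g$, i.e.\ $h\ge (p-k-g)/(g-1)$; taking the ceiling recovers the value of $h$ in the statement. I would also verify that Theorem~\ref{Main} applies, namely that $\alpha(\beta+1)=(g-1)(h+1)\le p-1$ for this $h$; in the complementary regime $(g-1)(h+1)>p-1$ one is past the Vinogradov threshold, every $a\in\mathbb{F}_p^*$ is good, and the claim is immediate.

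The subtle step, and the one I expect to be the main obstacle, is the boundary of this count: the definition of $h$ only forces $|B|\le k=|R|$, and a priori the $k$ roots of $D$ could fill out $B$ entirely. The real content is thus to exclude $R=B$. One natural lever is that $a\equiv 1$ is always good (take $x=y=1$): if $1\notin R$ this places a good element outside $R$, while $1\in R$ means $D\equiv 1$ and the claim already holds with $u=v=1$. Turning this observation into a clean proof that some root of $D$ must be good is exactly what secures the final unit of the estimate and pins down the precise ceiling.
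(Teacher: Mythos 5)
Your proposal reproduces the paper's argument essentially step for step: you take the $k$ roots of $x^k\equiv D\pmod p$, apply Theorem \ref{Main} with $\alpha=g-1$ and $\beta=h$, use the evenness of $k$ to raise $ax\equiv\pm y\pmod p$ to the $k$-th power and eliminate the sign, and finish by comparing the size of the root set $R$ with the size of the exceptional set $B$. Up to the final count everything matches the paper and is correct.

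The place where you stop short is the boundary of that count, and you are right to be uneasy there --- but your proposed repair does not work, and it is worth noting that the paper's own proof does not supply the missing unit either. The definition $h=\lceil(p-k-g)/(g-1)\rceil$ guarantees $(g-1)h\ge p-k-g$, hence $(g-1)(h+1)\ge p-k-1$ and $|B|\le (p-1)-(g-1)(h+1)\le k$; this is precisely the inequality ``$(g-1)(h+1)+1\ge p-k$'' invoked in the paper. When $(g-1)\mid(p-k-g)$ these are equalities, so both you and the paper are left with $|B|\le k=|R|$, which does not exclude $R\subseteq B$. Your lever --- that $a\equiv 1$ is always good --- only shows $1\notin B$; if $1\notin R$ this says nothing about whether the coset $R$ of the $k$-th roots of unity sits inside $B$, so it closes the gap only in the trivial case $D\equiv 1$, as you yourself concede. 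To actually finish one needs the strict inequality $(g-1)(h+1)\ge p-k$ (for instance by enlarging $h$ by one in the divisible case, or by exhibiting one further good element that must lie in $R$). In short: same approach as the paper, with a genuine one-unit gap at the end that you correctly identified but did not close --- and which the paper's proof passes over silently by treating $|B|\le k$ as if it were $|B|<k$.
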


If $g\geq h$ then the above $h$ is at most as as large as in Theorem \ref{cong} and $h$ is smaller here by at least one whenever $g(k+g)\geq p.$

\medskip
\begin{proof}
The equation $x^k \equiv  D \pmod{p}$ has $k$ solutions (see e.g. in \cite{Vi_3}, page 113).
By Theorem \ref{Main} if 

\[(g-1)(h+1)+1\geq p-k,\] 

which is provided by the condition
\[h= \left\lceil\frac{p-k-g}{g-1}\right\rceil,\] 

then there is an $a\in\mathbb{F}_p$ such that $a^k \equiv  D \pmod{p}$ and

\begin{equation}
  ax \equiv \pm y \pmod{p}.
\end{equation}

where $x,y$ are
\[
x\in \left\{1,2,\ldots ,g-1\right\},\quad y\in \left\{1,2,\ldots ,h\right\}.
\]

The following equations 

\begin{equation*}
\begin{split}
    ax \equiv \pm y \pmod{p}\\
    a^kx^k \equiv  y^k \pmod{p}\\
    Dx^k \equiv  y^k \pmod{p}
\end{split}    
\end{equation*}

show that there is at least one congruent pair between 
\[
\left\{D, 2^kD, \ldots , (g - 1)^kD\right\}\quad \text{and}\quad\left\{1, 2^k,\ldots , h^k\right\} ,
\]
as required. \qed
\end{proof}

\section{Sumsets vs. Directions}
In this section we are going to leave the Cartesian product structure and prove a result which generalizes Theorem \ref{Main2} and other results.
One of the most striking applications of R\'edei's method is the bound on the number of directions determined by a set of points in the affine  plane over the
finite field $GF(q)$ of $q$ elements. Given a set $M$ of $n$ points what is the minimum number
of directions determined by $M$? We say that the direction $m$ is
determined by $M$ if there is a line $mx+b-y=0$ spanned by two points of
$M,$ i.e. there are points $(a_i,b_i),(a_j,b_j)\in M$ such that $m=(a_i-a_j)/(b_i-b_j)$ if $b_i\neq b_j.$ If $b_i=b_j$ and $a_i\neq a_j$ then the two points determine the $m=\infty$ direction. 

In Theorem \ref{Main2} we proved a lower bound on the number of directions determined by a Cartesian product. It was better than Sz\H{o}nyi's bound in \cite{Szo,Szo2}, due to the special structure of the pointset. In the next result we generalize Theorem \ref{Main2}. 

Given an $n$-element subset $S\subset\mathbb{F}_p^2,$ and an $\alpha\in\mathbb{F}_p^*.$ Let us suppose that $n<p.$ We define the weighted sumset
\[
\Delta_\alpha=\{\alpha a_i+b_i\quad | \quad (a_i,b_i)\in S\},
\]
and the ratio set

\[
Q=\left\{\frac{a_i-a_j}{b_i-b_j}\quad \large| \quad (a_i,b_i),(a_j,b_j)\in S, b_i\neq b_j\right\}.
\]

The ratio set contains all directions determined by $S$ with the possible exception of the $(\infty)$ direction.

 \begin{theorem}\label{dir1}
 With the above notation, 
 if $S$ is not collinear, i.e. there are no elements $m,\beta\in \mathbb{F}_p$ such that $ma_i+\beta-b_i\equiv 0 \pmod{p}$ for all $(a_i,b_i)\in S,$ then 
 $|Q|\geq|S|-|\Delta_\alpha|+1.$
 \end{theorem}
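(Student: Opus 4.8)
The plan is to imitate the proof of Theorem \ref{Main2}, building a R\'edei polynomial attached to the point set $S$ rather than to a Cartesian product, and then reading off both the direction set $Q$ and the weighted sumset $\Delta_\alpha$ from two different specializations of the second variable.

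First I would write $S=\{(a_1,b_1),\dots,(a_n,b_n)\}$ and define
\[
H(x,y)=\prod_{i=1}^{n}\big(x+b_iy-a_i\big),
\]
which is monic of degree $n$ in $x$. The roots of $H(x,d)$ are the numbers $a_i-b_id$, and two of them coincide exactly when $(a_i-a_j)=(b_i-b_j)d$; for $b_i\ne b_j$ this says $d=(a_i-a_j)/(b_i-b_j)\in Q$, while for $b_i=b_j$ the two points are distinct, so $a_i\ne a_j$ and the roots differ. Hence for every $d\notin Q$ the polynomial $H(x,d)$ has $n$ distinct roots in $\mathbb{F}_p$ and therefore divides $x^p-x$. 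As in \cite{Szo,BSW} this produces an auxiliary $f(x,y)$, monic of degree $p-n$ in $x$, with
\[
F(x,y)=f(x,y)H(x,y)=x^p+h_1(y)x^{p-1}+\dots+h_p(y),\qquad \deg h_i\le i,
\]
and with $F(x,d)=x^p-x$ for all $d\notin Q$. In particular $h_i(d)=0$ for every $d\notin Q$ and every $1\le i\le p-2$, so any $h_i\not\equiv 0$ with $i\le p-2$ has at least $p-|Q|$ roots and thus forces $|Q|\ge p-i$.

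The new ingredient is the specialization that exposes $\Delta_\alpha$. The value $y_0=-\alpha^{-1}$ is chosen so that $a_i-b_iy_0$ and $a_j-b_jy_0$ agree exactly when $\alpha a_i+b_i=\alpha a_j+b_j$; hence $H(x,y_0)=\prod_{r}(x-r)^{m_r}$ has precisely $|\Delta_\alpha|$ distinct roots, with multiplicities $m_r$ satisfying $\sum_r m_r=n$ and $\sum_r(m_r-1)=n-|\Delta_\alpha|$. Since $H(x,y_0)\mid F(x,y_0)$, each such $r$ is a root of $F(x,y_0)$ of multiplicity at least $m_r$, so $F'(x,y_0)$ vanishes at $r$ to order at least $m_r-1$ and has at least $n-|\Delta_\alpha|$ roots counted with multiplicity. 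Writing $F(x,y_0)=x^p+c_1x^{p-1}+\dots+c_p$ with $c_i=h_i(y_0)$, and letting $i$ be the least index with $c_i\ne 0$, the derivative equals $F'(x,y_0)=(p-i)c_ix^{p-i-1}+\dots$, a nonzero polynomial of degree $p-i-1$. Comparing its degree with its root count gives $p-i-1\ge n-|\Delta_\alpha|$, that is $i\le p-1-n+|\Delta_\alpha|$. Because $c_i=h_i(y_0)\ne 0$ we have $h_i\not\equiv 0$, and this $i$ also satisfies $i\le p-2$ whenever $n>|\Delta_\alpha|$; feeding it into the bound of the previous paragraph yields $|Q|\ge p-i\ge n-|\Delta_\alpha|+1$.

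The step I expect to be the only real obstacle is justifying that $F'(x,y_0)\not\equiv 0$, i.e. that some $c_i$ with $1\le i\le p-1$ is nonzero; this is exactly where non-collinearity enters. If instead $c_1=\dots=c_{p-1}=0$, then $F(x,y_0)=x^p+c_p=(x+c_p)^p$ would have a single $p$-fold root, forcing $|\Delta_\alpha|=1$ and hence all points on the line $\alpha a+b=\text{const}$, contradicting the hypothesis. Thus under non-collinearity $|\Delta_\alpha|\ge 2$, the derivative $F'(x,y_0)$ is genuinely nonzero, and the degree count is available. The remaining boundary case $n=|\Delta_\alpha|$ asks only for $|Q|\ge 1$, which again follows from non-collinearity, since then not all $b_i$ are equal and $S$ determines at least one finite direction. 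Minor points to check along the way are the monicity and degree bookkeeping of $f$ and $F$ (so that $\deg_x F=p$ and $\deg h_i\le i$) and that $-\alpha^{-1}\in\mathbb{F}_p^*$ is a legitimate specialization regardless of whether it lies in $Q$.
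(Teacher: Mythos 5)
Your proof is correct and follows essentially the same route as the paper's: the R\'edei polynomial attached to $S$, Sz\H{o}nyi's auxiliary factor $f$, a specialization of the second variable whose roots enumerate $\Delta_\alpha$ with multiplicity, and a derivative/degree count played off against the vanishing of the $h_i$ on the complement of $Q$. The only differences are cosmetic and in your favor: you take $H(x,y)=\prod_i(x+b_iy-a_i)$ evaluated at $y_0=-\alpha^{-1}$ instead of the paper's $\prod_i(x+a_iy-b_i)$ at $y=-\alpha$, which is actually the normalization consistent with the paper's definition of $Q$ as $\{(a_i-a_j)/(b_i-b_j)\}$, and you spell out more carefully than the paper does why non-collinearity excludes the degenerate case $F(x,y_0)=(x+c)^p$ and settles the boundary case $|S|=|\Delta_\alpha|$.
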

 
\begin{proof}
We are going to use the R\'edei polynomial as before. Set

\begin{equation}\label{Dir+}
H(x,y)=\prod\limits_{(a_i,b_i)\in S}(x+a_iy-b_i),
\end{equation}
 and find $f(x,y)$ such that $f(x,y_0)H(x,y_0)=x^p-x$ whenever $y_0\not\in Q.$ Let us check the polynomial when we set $y=-\alpha.$
 
 \begin{equation}\label{prod3}
\begin{split}
    F(x,\alpha) & =f(x,\alpha)\prod\limits_{(a_i,b_i)\in S}(x-\alpha a_i-b_i)\\
    & =x^p+c_1x^{p-1}+c_2x^{p-2}+\ldots +c_p.
\end{split}    
\end{equation}

 Like in the proof of Theorem \ref{Main}, we check the derivatives to show that there is a small index $i$ where $c_i\neq 0,$ so $Q$ is large. A root $\alpha a_i+b_i$ is a multiple root if there is an $(a_j,b_j)\in S,$ $i\neq j,$ such that $\alpha a_i+b_i\equiv \alpha a_j+b_j \pmod{p}.$ The derivative of the polynomial in (\ref{prod3}) has at least $d=|S|-|\Delta_\alpha|$ roots, so $i-1\leq p-d,$ unless $F(x,\alpha)=(x+c)^p,$ when $S$ is collinear.
 \qed
 \end{proof}
 
 \medskip
 Note that setting $\alpha=0$ for a Cartesian product, $S,$ gives back Theorem \ref{Main2}.
 
 \section{Acknowledgements}
I would like to thank the anonymous referee for the helpful report and in particular for the improvement mentioned in Remark \ref{rem_uj}. I am also thanful to  Andrew Granville, Ilya Shkredov, and Ethan White for helpful discussions. Research was supported in part by an NSERC Discovery grant, OTKA K 119528 and NKFI KKP 133819 grants.

\end{document}